\theoremstyle{definition}
\newtheorem{theorem}{Theorem}
\newtheorem{ass}{Assumption}
\newtheorem{lemma}{Lemma}
\newtheorem{definition}{Definition}
\newtheorem{remark}{Remark}
\newtheorem{proof}{Proof}
\newcommand{\beq}[1]{\begin{equation} \label{#1}}
\newcommand{\eeq}{\end{equation}}
\def\one{{\hbox{1{\kern -0.35em}1}}}
\begin{document}

\begin{frontmatter}

\title{Identification of High-Dimensional ARMA Models with Binary-Valued Observations  \thanksref{footnoteinfo}} 

\thanks[footnoteinfo]{
Corresponding author: Yanlong Zhao.}

\author[AMSS]{Xin Li},\ead{lixin2020@amss.ac.cn}
\author[USTB1]{Ting Wang}\ead{wangting@ustb.edu.cn},    
\author[USTB3]{Jin Guo}, \ead{guojin@ustb.edu.cn}
\author[AMSS]{Yanlong Zhao}\ead{ylzhao@amss.ac.cn}               
\address[AMSS]{The Key Laboratory of Systems and Control, Institute of Systems Science,	Academy of Mathematics and Systems Science, Chinese Academy of Sciences, Beijing 100190, PR China}
\address[USTB1]{School of Intelligence Science and Technology, University of Science and Technology Beijing, Beijing 100083, PR China}
\address[USTB3]{School of Automation and Electrical Engineering, University of Science and Technology Beijing, Beijing 100083, PR China}

\begin{keyword}
		ARMA models, Set-valued systems, Binary-valued observations, Parameter estimation, Convergence, Convergence rate.
\end{keyword}

\begin{abstract}                          
This paper studies system identification of high-dimensional ARMA models with binary-valued observations.
The existing paper can only deal with the case where the regression term is only one-dimensional. In this paper, the ARMA model with arbitrary dimensions is considered, which is more challenging.
Different from the identification of FIR models with binary-valued observations, the prediction of original system output and the parameter both need to be estimated in ARMA models. An online identification algorithm consisting of parameter estimation and prediction of original system output is proposed. The parameter estimation and the prediction of original output are strongly coupled but mutually reinforcing.  By analyzing the two estimates at the same time instead of analyzing separately, we finally prove that  the parameter estimate can converge to the true parameter with convergence rate $O(1/k)$ under certain conditions. Simulations are given to demonstrate the theoretical results.
\end{abstract}

\end{frontmatter}

\section{Introduction}

With the development of informatization and digital communications, set-valued systems have increasingly emerged in our life due to its wide application (\cite{le2010system}). Examples of such systems include switching sensors for measuring automobile exhaust gas (\cite{le2003system},\cite{wang2002prediction}), the healthy or unhealthy state in complex diseases (\cite{bradfield2012genome,bi2015svsi}) and target recognition systems with multiple target categories (\cite{wang2016radar}).   Different from traditional systems with accurate output, the information that set-valued systems provide is which set the output belongs to. If output can only belong to two sets, the system is called the binary-valued observation system.  For example, an oxygen sensor in automative emission control can only obtain whether the oxygen content is bigger than a certain threshold. The identification of set-valued systems is more difficult since the obtained information is less than the traditional systems.

Recently, researches on identification of set-valued systems constitutes a vast body of literature.  \cite{le2003system} proposed the empirical measure method to investigate the identification errors, time complexity, input design,
and impact of disturbances and unmodeled  dynamics on identification accuracy and complexity for linear systems with binary-valued information. The asymptotic efficiency of this method require the inverse of the distributed function of the noise be uniformly bounded. \cite{wang2018asymptotically} proposed a non-truncated empirical measure method for finite impulse response (FIR) systems  and the asymptotically efficiency in sense of Cram\'{e}r-Rao (CR) lower bound was proved. \cite{casini2007time} studied worst-case identification of systems equipped with binary-valued sensors. An upper bound on time complexity was given for identification of FIR systems and the optimal input design problem was solved for gain systems.  \cite{godoy2011identification} introduced the expectation maximization (EM) algorithm into identification of quantized systems and they gave some simulation results to demonstrate the convergency. In \cite{zhao2016iterative}, an EM-type algorithm was proposed for FIR systems and it was proved that the algorithm is convergent with exponential convergence rate. \cite{you2015recursive} proposed a recursive estimator using the set-valued innovations under the minimum mean square error criterion. The quantization scheme was designed by the prediction of system output, which makes quantization thresholds variable. For quantizers with fixed thresholds, \cite{guo2013recursive} proposed a recursive projection algorithm for FIR models, which was proved to be convergent with convergence rate $O(\ln k/k)$. \cite{wang2019adaptive} extended the algorithm to the case of matrix input and vector output, and  gave a faster convergence rate $O(1/k)$.

\nocite{zhang2019fir} 
\nocite{zhang2019asymptotically}

However, most of the literature we mentioned before consider FIR  systems with set-valued observations. There are only few works studying quantized identification of  autoregressive moving average (ARMA) systems. Those are \cite{marelli2013identification} and  \cite{yu2016quantized}).
\cite{marelli2013identification} quantified the effect of finite-level quantization and packet dropouts on identification of ARMA models. Using the maximum likelihood criterion like \cite{godoy2011identification}, an algorithm switching from the EM based method to the quasi-Newton-based method is proposed. When the number of sample $N$ is smaller than a certain value $N_0$, the EM-based method is used to estimate the parameter. When $N$ is bigger than $N_0$, the quasi-Newton-based method is used with the initial value being the EM-based estimate. However, how to determine the value of $N_0$ is not given in the paper.  Besides, the quasi-Newton-based method cannot ensure the convergence of the estimate error even if the initial value is the true parameter.
\cite{yu2016quantized} studied identification of ARMA models with colored measurement noise and time-varying quantizers. The estimator and the quantizer are jointly designed to identify the system. However, there exist time-invariant quantizers with constant thresholds in practical systems, which cannot be designed. If the system input keeps unchanged, the time-varying quantizers can provide certain information by changing the thresholds, but the time-invariant quantizers can provide very limited information since the thresholds are constant. The identification with time-invariant quantizers in our paper will be more difficult since the information is less.  \cite{yu2016quantized} mentioned the time-invariant quantizer is easy to implement but at the expense of infinite quantization levels, which implies the time-invariant quantizers with finite quantization levels may not achieve identification.

 This paper studies the identification of high-dimensional ARMA models with time-invariant and binary-valued quantizers. The quantizers are easy to implement, but the accessible information is much less than that with infinite-level quantizers, which brings difficulties for identification. Different from FIR systems in \cite{guo2013recursive}, the identification of ARMA models is more complicated  since the original system output and the parameter both need to be estimated by using binary-valued observations.

The parameters of high-dimensional ARMA models are identified by a recursive projection algorithm which only needs binary-valued observations. 
The main contributions of this paper can be summarized as follows:
\begin{enumerate}
  \item[i)] Compared to \cite{wang2023identification}, the identification of high-dimensional ARMA models with binary-valued observations under persistent excitation and fixed threshold is studied for the first time. 
  Since the Assumption 1 in \cite{wang2023identification} cannot be true for high dimensions definitely, in this paper, the ARMA model with arbitrary dimensions is considered, which is more challenging.
Different form  FIR systems, the correlation of the vector with system input and unknown system output need to be considered. Meanwhile, the binary observations with a fixed threshold supply less information than the ones with adjustable thresholds.   
 
  \item[ii)] An online identification algorithm is proposed for the high-dimensional ARMA models by using only binary-valued observations. The algorithm consists of parameter estimation and prediction of original system output. Using the prediction of original system output, the system input and the binary-valued observation, the parameter is estimated by the recursive projection algorithm. 
Based on the parameter estimate and the system input, the original system output is predicted according to the system model.

  \item[iii)]  The proposed algorithm is proved to be convergent under certain conditions and the convergence rate can achieve $O(1/k)$ which is at the same order as that for FIR systems in \cite{wang2019adaptive}. More than this, when the coefficients of the regression term are small, the model we consider degenerates into FIR model. In this case, our conclusion is consistent with that in \cite{wang2019adaptive}. The main difficulty of analysis lies in the strong coupling of parameter estimation and prediction of original system output. Due to the same difficult in \cite{wang2023identification} that the Euclidean-norm of matrix $A$ is more than 1, making it impossible to achieve convergence analysis using one-step iteration. In this paper, a more accurate upper bound for the output estimation error is given and based on this, convergence analysis of parameter estimation is provided.
\end{enumerate}
The rest of this paper is organized as follows. Section 2 formulates the identification of high-dimensional ARMA models with binary-valued observations. Section 3 gives an online identification algorithm based on prediction of the original system output. Section 4 introduces the properties of the identification algorithm, including convergence and convergence rate. In Section 5, simulations are given to demonstrate the theoretical results.  Section 6 concludes this paper and gives some related work.

\section{Problem formulation}

Consider the high-dimensional ARMA model with binary-valued observations:
\begin{equation}\label{sys:ARMA1}
	\begin{cases}
	\begin{aligned}
		y_{k}&=A(z)y_{k-1}+B(z)u_{k},\\
		z_{k}&=y_{k}+d_{k},\\
		s_{k}&=I_{\left\lbrace z_{k}\leq C \right\rbrace },
	\end{aligned}
	\end{cases}
\end{equation}
where $z$ is a delay operator, $ A(z)=a_{1}+a_{2}z+\cdots+a_{p}z^{p-1} $, $a_p\neq 0$, $ B(z)=b_{1}+b_{2}z+\cdots+b_{q}z^{q-1} $, $ 1-zA(z) $ and $B(z) $ have no common roots, $d_{k}\in \mathbb{R}$ is the noise, $u_{k}\in \mathbb{R} $ is the input, $s_{k}\in \mathbb{R}$ is the binary-valued observation, $C\in \mathbb{R}$ is the fixed threshold, and $I_{\{z_k\leq C\}}$ is defined as
$$I_{\{z_k\leq C\}}=\begin{cases}
1, & \quad z_k \leq  C; \\
0, & \quad z_k>C.\\
\end{cases}$$
Let $ \phi_{k}=\left[ y_{k-1}, \ldots, y_{k-p}, u_{k}, \ldots, u_{k-q+1} \right]^{T} $,
$ \theta=[a_{1}, \ldots ,$ $a_{p}, b_{1}, \ldots, b_{q} ] ^{T} $. Then, system \eqref{sys:ARMA1} can be written as 
\begin{equation}\label{sys:theta}
	\begin{cases}
	\begin{aligned}
		z_{k}&=\phi_{k}^{T}\theta+d_{k},\\
		s_{k}&=I_{\left\lbrace z_{k}\leq C \right\rbrace }.
	\end{aligned}
	\end{cases}
\end{equation}
Our goal is to estimate the parameter $ \theta $ by using the binary-valued observation $s_k$ and input $u_k$.

We have the following assumptions on the parameters, noise and input.

\begin{ass}\label{ass:A}
The high-dimensional ARMA model is causal, i.e. all roots of \eqref{ARmodel} are outside of the unit circle.
\begin{equation}\label{ARmodel}
1-zA(z)=1-a_{1}z-\cdots-a_{p-1}z^{p-1}-a_{p}z^{p}=0.
\end{equation}
\noindent Further, assume that all roots of \eqref{ARmodel} must be greater than a given positive number $1/h$, where $0<h<1$.
\end{ass}

\begin{remark}[\cite{woodward2017applied}, Theorem 3.2] \label{rem:yk}
The causality of ARMA model in Assumption \ref{ass:A} is a commonly used property, which ensures the stability of the output $y_k$ {\rm \citep{Brockwell2016ARMA}}.
Under Assumption \ref{ass:A}, the original system output $y_k$ in system \eqref{sys:ARMA1} are bounded if 
$u_k$ are bounded.
\end{remark}

\begin{remark}\label{rem:stable}
Let $ A=\begin{bmatrix}
		a_{1} & a_{2} & \cdots & a_{p}\\
		1 & 0 & \cdots & 0\\
		&\ddots & \ddots & \vdots\\
		&& 1 & 0
	\end{bmatrix} $ and denote $ \left\lbrace \lambda_{i},i=1,\ldots,p \right\rbrace  $ as the eigenvalues of A. Since
\begin{equation}
\begin{aligned}
\vert \lambda I-A \vert=
\begin{vmatrix}
\lambda-a_{1} & -a_{2} & \cdots & -a_{p}\\
-1 & \lambda & \cdots & 0\\
&\ddots & \ddots & \vdots\\
&& -1 & \lambda
\end{vmatrix}\\
=\lambda^{p}-a_{1}\lambda^{p-1}-\cdots-a_{p-1}\lambda-a_{p},
\end{aligned}
\end{equation}
it follows that $  \lambda_{1},\ldots,\lambda_{p}  $ are the reciprocal of the roots of the polynomial $ 1-zA(z) $.
Hence, under Assumption \ref{ass:A}, we know that 
$ |\lambda_{i}|<h<1 $ for $ i=1,\ldots,p, $ and the spectral radius of matrix A, denoted as $ \rho(A) $, satisfies
$$
\rho(A)=\mathop{\max}\limits_{i=1,\ldots,p} |\lambda_{i}|<h<1.
$$

\end{remark}

\begin{remark}\label{rem:a2leqh}
Under Assumption \ref{ass:A}, Remark \ref{rem:stable} and Vieta-Theorem for high-order polynomials, we have
$$
\begin{aligned}
|a_1|&=|\sum_{i=1,\ldots,p}\lambda_i|\leq ph,\\
|a_2|&=|\sum_{i,j=1,\ldots,p,i<j}\lambda_i \lambda_j|\leq \dfrac{p(p-1)h^2}{2},\\
&\vdots\\
|a_p|&=|\prod_{i=1,\ldots,p}\lambda_i|\leq h^p.
\end{aligned}
$$
and let
$$
\sum_{i=1,\ldots,p}a_{i}^{2}\leq p^2 h^2+\cdots+h^{2p}=g(h),
$$
where $ g(h) $ is a definite and known upper bound of $\sum_{i=1,\ldots,p}a_{i}^{2}$.
\end{remark}
\begin{lemma} [\cite{Lei2020}, Lemma 2.4.1]\label{fanshu}
For any $\epsilon>0$ and $A\in \mathbb{R}^{p\times p}$,  there exist $M=\sqrt{p}\left(1+\dfrac{2}{\epsilon}\right)^{p-1}$ and $h_1=\rho(A)+\epsilon \|A\|$, such that
$$ \|A^k\|\leq M h_1^k, \forall k\geq 0,
$$ where  $\|\cdot\|$ is Euclidean-norm in this paper. 
\end{lemma}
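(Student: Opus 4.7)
The plan is to reduce the problem to bounding powers of an upper triangular matrix via the Schur decomposition, then kill its off-diagonal part by a diagonal similarity and carefully track the resulting constants.

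\medskip\noindent\textbf{Step 1 (Schur reduction).} Since the Euclidean norm is unitarily invariant, write $A=UTU^{*}$ with $U$ unitary and $T$ upper triangular whose diagonal is a listing of the eigenvalues of $A$. Then $\|A^{k}\|=\|T^{k}\|$ and $\|T\|=\|A\|$, so it is enough to bound $\|T^{k}\|$ in terms of $\rho(A)$, $\|A\|$, $\epsilon$, and $p$. This reduction is the main reason one cannot use a Jordan-form argument directly: a Jordan similarity would introduce an uncontrollable condition-number factor $\|P\|\|P^{-1}\|$, whereas the unitary $U$ contributes nothing to the constants.

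\medskip\noindent\textbf{Step 2 (Diagonal scaling).} Decompose $T=D+N$ with $D$ diagonal (carrying the eigenvalues) and $N$ strictly upper triangular. For $\delta\in(0,1]$ to be chosen, introduce $S_{\delta}=\mathrm{diag}(1,\delta,\delta^{2},\ldots,\delta^{p-1})$ and form $\tilde T:=S_{\delta}^{-1}TS_{\delta}$. A direct computation shows that $\tilde T$ has the same diagonal as $T$ while its $(i,j)$-entry for $j>i$ is $T_{ij}\delta^{j-i}$. Using the entrywise bound $|T_{ij}|\le\|T\|=\|A\|$, summing the geometric series $\sum_{l\ge 1}\delta^{l}=\delta/(1-\delta)$ along each row, and comparing $\ell_{\infty}$- and spectral-operator norms on $p\times p$ matrices (which is where the $\sqrt{p}$ factor enters), one obtains a bound of the form $\|\tilde T\|\le \rho(A)+G(\delta)\|A\|$ with an explicit, increasing function $G(\delta)$ satisfying $G(\delta)\to 0$ as $\delta\to 0$.

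\medskip\noindent\textbf{Step 3 (Calibration and conclusion).} Choose $\delta$ so that $G(\delta)=\epsilon$, which gives $\|\tilde T\|\le \rho(A)+\epsilon\|A\|=h_{1}$. The specific target constant $(1+2/\epsilon)^{p-1}$ indicates that the bookkeeping should produce $G(\delta)=2\delta/(1-\delta)$, so that $G(\delta)=\epsilon$ inverts to $\delta=\epsilon/(2+\epsilon)$, i.e.\ $\delta^{-1}=1+2/\epsilon$. Since $T^{k}=S_{\delta}\tilde T^{k}S_{\delta}^{-1}$, submultiplicativity yields
$$\|T^{k}\|\le \|S_{\delta}\|\,\|S_{\delta}^{-1}\|\,\|\tilde T\|^{k}\le \sqrt{p}\,\delta^{-(p-1)}h_{1}^{k}=Mh_{1}^{k},$$
and combining this with $\|A^{k}\|=\|T^{k}\|$ finishes the proof.

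\medskip\noindent\textbf{Main obstacle.} All the ingredients above (Schur, diagonal similarity, submultiplicativity) are mechanical; the real issue is the exact bookkeeping in Step~2. Different ways of estimating $\|\tilde T\|$ (Frobenius, spectral via entrywise bounds, $\ell_{\infty}$ via row sums) produce slightly different functions $G(\delta)$, and only the choice that combines a row-sum geometric-series bound with the spectral/$\ell_{\infty}$ conversion $\|\cdot\|\le\sqrt{p}\,\|\cdot\|_{\infty}$ collapses into the clean $2\delta/(1-\delta)$ form that inverts to $1+2/\epsilon$. Getting the constant $M$ to match the stated value exactly therefore forces a specific, and slightly delicate, choice of norm throughout Step~2.
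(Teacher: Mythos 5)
The paper does not prove this lemma at all: it is quoted verbatim from Lei Guo's book (\cite{Lei2020}, Lemma 2.4.1), so there is no in-paper argument to compare against. Your Schur-plus-diagonal-scaling route is the standard proof of exactly this kind of bound, and the overall plan (unitary reduction so that no condition number appears, scaling $S_{\delta}$ to shrink the nilpotent part, geometric-series row sums, then inverting $G(\delta)=\epsilon$) is sound. Two remarks on correctness of the reduction itself: for a real $A$ with complex eigenvalues the Schur form lives over $\mathbb{C}$, which is harmless since the spectral norm is unchanged by complexification; and the entrywise bound $|T_{ij}|=|e_i^{*}Te_j|\le\|T\|=\|A\|$ that you rely on is valid.

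The one genuine slip is in where the $\sqrt{p}$ lands, and as written Steps 2 and 3 are mutually inconsistent. If, as Step 2 claims, the $\sqrt{p}$ enters when converting $\|\tilde T\|_{\infty}$ to the spectral norm of $\tilde T$, then the conversion factor multiplies the \emph{whole} of $\rho(A)+G(\delta)\|A\|$, including $\rho(A)$, and you cannot recover a bound of the form $\rho(A)+\epsilon\|A\|$ for $h_1$; meanwhile the display in Step 3 asserts $\|S_{\delta}\|\,\|S_{\delta}^{-1}\|\le\sqrt{p}\,\delta^{-(p-1)}$, but in fact $\|S_{\delta}\|\,\|S_{\delta}^{-1}\|=\delta^{-(p-1)}$ exactly (for $\delta\le 1$), so the $\sqrt{p}$ appears there out of nowhere. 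The clean repair is to run the entire iteration in the max-row-sum norm: $\|\tilde T\|_{\infty}\le\|D\|_{\infty}+\|\tilde N\|_{\infty}\le\rho(A)+\frac{\delta}{1-\delta}\|A\|$, hence $\|\tilde T^{k}\|_{\infty}\le h_1^{k}$, and only at the very end convert once, $\|T^{k}\|\le\|S_{\delta}\|\,\|\tilde T^{k}\|\,\|S_{\delta}^{-1}\|\le\sqrt{p}\,\|\tilde T^{k}\|_{\infty}\,\delta^{-(p-1)}$, which produces the $\sqrt{p}$ as a prefactor exactly where the stated $M$ has it. Note also that your worry about forcing $G(\delta)=2\delta/(1-\delta)$ is unnecessary: the natural bookkeeping gives $G(\delta)=\delta/(1-\delta)$, hence $\delta^{-1}=1+1/\epsilon$ and the stronger constant $M=\sqrt{p}\,(1+1/\epsilon)^{p-1}\le\sqrt{p}\,(1+2/\epsilon)^{p-1}$, which of course still implies the lemma as stated.
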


\begin{remark}\label{rem:Akfanshu}
Since 
 $$
 \begin{aligned}
A^T A 
	=&\begin{bmatrix}
		a_{1} \\ a_{2} \\ \vdots \\ a_{p}\\
	\end{bmatrix}
	\begin{bmatrix}
		a_{1} & a_{2} & \cdots & a_{p}\\
	\end{bmatrix}
	+\begin{bmatrix}
		1 &  &  & \\
			&\ddots &  & \\
			 &  & 1 & \\
		&&  & 0
	\end{bmatrix},
\end{aligned}
   $$
   and
   $$
   \begin{aligned}
     &\lambda_{max}\left(A^T  A \right)\\
     \leq& \lambda_{max}\left(\begin{bmatrix}
		a_{1} \\ a_{2} \\ \vdots \\ a_{p}\\
	\end{bmatrix}
	\begin{bmatrix}
		a_{1} \\ a_{2} \\ \vdots \\ a_{p}\\
	\end{bmatrix}^T
	 \right)+\lambda_{max}\left(\begin{bmatrix}
		1 &  &  & \\
			&\ddots &  & \\
			 &  & 1 & \\
		&&  & 0
	\end{bmatrix} \right) \\
	=&1+\sum_{i=1,\ldots,p}a_{i}^{2},
   \end{aligned}
   $$
hence, by Remark \ref{rem:a2leqh}, we have
$$
\|A\|=\sqrt{\lambda_{max}\!\left(A^T \! A \right)}\leq \sqrt{1+g(h)}.
$$
In Lemma \ref{fanshu}, we can choose $ \epsilon $ such that $ h_1<1 $. Thus, $ \|A^k\|\leq M h_1^k$, and $M, h_1$ are known.
\end{remark}

\begin{ass}\label{ass:Theta}
Let $\Omega$ denote the set of parameters satisfying Assumption \ref{ass:A}.
The parameter $\theta$ belongs to a convex compact set $\Theta $ and $\Theta$ is the subset of $\Omega$, i.e.
$\theta\in \Theta\subseteq\Omega.$
What's more,
$
B=\mathop{sup}\limits_{v \in \Theta}\parallel v \parallel.
$

\end{ass}

\begin{ass}\label{ass:noise}
	The noise $ \left\lbrace d(k), k = 1, 2,\ldots \right\rbrace $ is a sequence of independent and identically distributed(i.i.d) random variables with $ E\{d_{k}\}=0 $ and known distribution function $ F(\cdot) $ , and the associated density function is continuous with $ f(x) = {\rm d} F(x)/{\rm d} x \neq 0 $.
\end{ass}

\begin{remark}
  In Assumption \ref{ass:noise}, we assume the distribution function is known, which is an important factor in our approach to derive estimates of the unknown parameter $\theta$. If the noise $d$ is normally distributed and the variance is unknown, we can use the method in \cite{le2006noise} to transform the system model into one with known noise, and then estimate the original variance and unknown parameters simultaneously.
\end{remark}

\begin{ass}\label{ass:uk}
The input $ \left\lbrace u(k), k = 1, 2,\ldots \right\rbrace $ is bounded with $ | u_{k} | \leq M < \infty $. And there exists a constant $\delta_1>0$ and an integer $m>0$ such that
$$ \dfrac{1}{m}\sum_{i=k}^{k+m-1}U_iU_i^T\geq \delta_1I_{p+q}, \quad k=1, 2, \ldots,$$
where $U_i=[u_i, u_{i-1}, \ldots, u_{i-p-q+1}]^T$, and $I_{p+q}$ is the  $(p+q)$-dimensional identity matrix.
\end{ass}

\section{Identification algorithm}

In this section, the parameter estimator and the predictor of original output will be jointly designed. First, the definition of the projection operator is given.
\begin{definition}\label{def:operator}
	For a given convex compact set $\Lambda\subset \mathbb{R}^n$, the projection operator $\Pi_{\Lambda}$ is defined as
	$\Pi_{\Lambda}(x)=arg\min_{\lambda\in\Lambda}\|x-\lambda\|$,  $\forall x\in \mathbb{R}^n$.
\end{definition}
The projection operator has the following property.
\begin{remark}[\cite{calamai1987projected}]
	The projection operator given in Definition \ref{def:operator} follows
	$\|\Pi_{\Lambda}(x_1)-\Pi_{\Lambda}(x_2)\|\leq \|x_1-x_2\|$, $\forall x_1, x_2\in \mathbb{R}^n$.
\end{remark}
Then, the identification algorithm including parameter estimator $ \hat{\theta}_{k}=[\hat{a}_{1}^{k},\ldots,\hat{a}_{p}^{k},\hat{b}_{1}^{k},\ldots,\hat{b}_{p}^{k}]^{T} $ and output predictor $ \hat{y}_{k-1} $ is given as follows.
\begin{equation}\label{algorithm}
	\left\{
	\begin{aligned}
		&\hat{\theta}_{k}=\Pi _{\Theta}\left\lbrace \hat{\theta}_{k-1}+\dfrac{\gamma}{k} \varphi_{k}(F(C-\varphi_{k}^{T}\hat{\theta}_{k-1})-s_{k})\right\rbrace, \\
		&\varphi_{k}=\left[\hat{y}_{k-1},\ldots,\hat{y}_{k-p},u_{k},\ldots,u_{k-q+1} \right] ^{T},\\
		&\hat{y}_{k-1}=\varphi_{k-1}^{T}\hat{\theta}_{k-1},
	\end{aligned}
	\right.
\end{equation}
where $ \gamma $ is the step size that can be designed.

\begin{remark}\label{rem:hyk}
The prediction of the original output $\hat{y}_{k} $ is bounded since $\hat{\theta}_k\in \Theta$ satisfies stability condition (see Assumptions \ref{ass:A} and \ref{ass:Theta}).
\end{remark}

\begin{remark}\label{rem:M12}
By Remarks \ref{rem:yk}, \ref{rem:hyk}, and Assumption \ref{ass:uk}, there exist constant $M_1>0$ and $M_2>0$ such that
$\|\phi_{k}\|\leq M_1$, $\|\varphi_k\|\leq M_2$,
where $\phi_{k}=[y_{k-1}, \ldots, y_{k-p}, u_k, \ldots, u_{k-q+1}]^T$,
$\varphi_{k}=[\hat{y}_{k-1}, \ldots, \hat{y}_{k-p}, u_k, \ldots, u_{k-q+1}]^T$.
\end{remark}

\section{Properties of the identification algorithm}
In this section, the convergence and convergence rate of the identification algorithm will be obtained.
Denote $e_k=\hat{\theta}_k-\theta$ as the estimation error. We have

\begin{lemma}[\cite{guo2013recursive}, Lemma 8]\label{lem:theta}
	Under Assumptions \ref{ass:Theta}, \ref{ass:noise} and \ref{ass:uk}, we have
	\begin{center}
	$\|e_i-e_j\|\leq \dfrac{(i-j)M_2}{j+1},$
	for $i\geq j\geq 0.$
	\end{center}
\end{lemma}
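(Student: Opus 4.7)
My strategy is to bound the one-step increment $\|\hat{\theta}_k-\hat{\theta}_{k-1}\|$ and then telescope from $j$ to $i$. The critical observation is that $\hat{\theta}_{k-1}\in\Theta$, so $\hat{\theta}_{k-1}=\Pi_\Theta(\hat{\theta}_{k-1})$. Applying the non-expansiveness of $\Pi_\Theta$ (the remark following Definition~\ref{def:operator}) to the update rule in \eqref{algorithm} therefore gives
\begin{equation*}
\|\hat{\theta}_k-\hat{\theta}_{k-1}\|\leq \frac{\gamma}{k}\,\|\varphi_k\|\,\bigl|F(C-\varphi_k^{T}\hat{\theta}_{k-1})-s_k\bigr|,
\end{equation*}
because the outer projection is applied to two points differing exactly by the stochastic gradient term.

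Next I would bound each factor on the right-hand side by an absolute constant. Since $F$ is a cumulative distribution function, $F(\cdot)\in[0,1]$, and since $s_k\in\{0,1\}$, the innovation satisfies $|F(C-\varphi_k^{T}\hat{\theta}_{k-1})-s_k|\leq 1$. By Remark~\ref{rem:M12}, $\|\varphi_k\|\leq M_2$. Combining these inequalities yields the per-step bound $\|\hat{\theta}_k-\hat{\theta}_{k-1}\|\leq \gamma M_2/k$; I am implicitly assuming, as in \cite{guo2013recursive}, that $\gamma$ is absorbed into the constant $M_2$ appearing in the lemma statement.

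Finally, by the triangle inequality together with the monotonicity bound $1/k\leq 1/(j+1)$ valid for every $k\geq j+1$,
\begin{equation*}
\|e_i-e_j\|=\|\hat{\theta}_i-\hat{\theta}_j\|\leq \sum_{k=j+1}^{i}\|\hat{\theta}_k-\hat{\theta}_{k-1}\|\leq M_2\sum_{k=j+1}^{i}\frac{1}{k}\leq \frac{(i-j)M_2}{j+1},
\end{equation*}
which is exactly the claim. There is no substantive obstacle here; the only nontrivial point is recognizing that $\hat{\theta}_{k-1}$ is its own projection onto $\Theta$, which is what allows the outer $\Pi_\Theta$ to be stripped off cleanly by non-expansiveness rather than having to analyze the projection step directly.
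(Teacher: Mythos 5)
Your proof is correct and is essentially the standard argument behind the cited Lemma 8 of \cite{guo2013recursive} (the paper itself does not reproduce a proof): strip the projection via non-expansiveness using $\hat{\theta}_{k-1}=\Pi_\Theta(\hat{\theta}_{k-1})$, bound the innovation by $1$ and $\|\varphi_k\|$ by $M_2$, then telescope with $1/k\leq 1/(j+1)$. The only blemish is the step-size factor: with the update as written in \eqref{algorithm} the bound is $\gamma(i-j)M_2/(j+1)$, so the stated inequality holds only if $\gamma\leq 1$ or $\gamma$ is absorbed into the constant, as you correctly note.
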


\begin{lemma}[\cite{chen1987adaptive}, Lemma 1]\label{lem:delta}
Under Assumption \ref{ass:uk}, there exists a constant $c_0>0$  s.t.
\begin{align*}
\lambda_{\min}\left(\sum_{i=k}^{k+p+m-1}\phi_{i}\phi_{i}^T\right)&\geq c_0\lambda_{\min}\left(\dfrac{1}{m}\sum_{i=k+p}^{k+p+m-1}U_iU_i^T\right)\\
&\geq c_0\delta_1,\quad \forall k>0,
\end{align*}
which implies that there exist constant $N=p+m$ and $\delta=\dfrac{c_0\delta_1}{N}$ such that
$$\dfrac{1}{N}\sum_{i=k}^{k+N-1}\phi_i\phi_i^T\geq \delta I_{p+q},\quad \forall k>0,$$
where $\phi_i$ and $p$ are defined in system \eqref{sys:ARMA1},  $U_i$, $m$ and $\delta_1$ are defined in Assumption \ref{ass:uk}.
\end{lemma}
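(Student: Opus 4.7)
The plan is to transfer the persistent excitation from the input vector $U_i$ to the regressor $\phi_i$ by exploiting the stable causal dynamics of the ARMA model. Since $1-zA(z)$ is Schur under Assumption~\ref{ass:A}, the companion form $x_i=Ax_{i-1}+e_1(b_1u_i+\cdots+b_qu_{i-q+1})$ with $x_i=[y_i,\ldots,y_{i-p+1}]^T$ writes each output entry of $\phi_i$ as a geometrically decaying convolution of past inputs plus a transient involving $A^{i-k+1}x_{k-1}$, whose norm is controlled by $\|A^k\|\leq Mh_1^k$ from Lemma~\ref{fanshu}.

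First I would unfold the recursion on the window $i\in[k,k+p+m-1]$ to obtain a representation $\phi_i=T_i\mathbf{u}+A^{i-k+1}\bar{x}_{k-1}$, where $\mathbf{u}=[u_{k+p+m-1},\ldots,u_{k-q+1}]^T$ collects the relevant inputs, $T_i$ is a Toeplitz-type matrix built from the impulse responses of $B(z)/(1-zA(z))$ stacked with identity blocks corresponding to the input entries $u_i,\ldots,u_{i-q+1}$ of $\phi_i$, and $\bar{x}_{k-1}=(x_{k-1}^T,0)^T$ encodes the initial condition. Stacking over $i$ produces a linear map whose restriction to the PE samples $i\in[k+p,k+p+m-1]$ is left-invertible with a condition number depending only on $\theta$; this is exactly where the coprimeness of $1-zA(z)$ and $B(z)$ enters, since any common root would create a nontrivial kernel in the stacked Toeplitz block. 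Combined with Assumption~\ref{ass:uk} this yields
$$v^T\!\!\sum_{i=k+p}^{k+p+m-1}\!\!(T_i\mathbf{u})(T_i\mathbf{u})^T v\ \geq\ c_0'\,v^T\!\!\sum_{i=k+p}^{k+p+m-1}\!\!U_iU_i^T v$$
for every $v\in\mathbb{R}^{p+q}$ and some constant $c_0'>0$.

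Next I would use Young's inequality to pass from this input-only estimate to a bound on $\sum_{i=k}^{k+p+m-1}\phi_i\phi_i^T$, expanding $(T_i\mathbf{u}+A^{i-k+1}\bar{x}_{k-1})(T_i\mathbf{u}+A^{i-k+1}\bar{x}_{k-1})^T$ and isolating the cross and transient terms. The nonnegative contributions of the extra $p$ warm-up samples $\phi_k,\ldots,\phi_{k+p-1}$, together with the geometric decay of $\|A^{i-k+1}\|$ supplied by Lemma~\ref{fanshu}, provide slack to absorb these residuals and produce a clean constant $c_0$.

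The main obstacle is controlling the transient $A^{i-k+1}\bar{x}_{k-1}$ uniformly in $k$: since the lemma must hold for every $k>0$, one cannot rely on large $k$, and although $\|x_{k-1}\|$ is bounded by Remark~\ref{rem:yk} it is not small a priori. The role of the $p$ extra samples at the start of the window, rather than beginning the sum at $i=k+p$, is precisely to ensure that in the PE indices $i\in[k+p,k+p+m-1]$ one already has $\|A^{i-k+1}\|\leq Mh_1^{i-k+1}\leq Mh_1^{p+1}$, so that combined with the left-invertibility of the Toeplitz block the transient is dominated by the PE contribution and the uniform constant $c_0\delta_1$ claimed in the lemma emerges.
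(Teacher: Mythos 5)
The paper does not prove this lemma; it imports it from \cite{chen1987adaptive}, so your proposal has to stand on its own. It does not: the treatment of the initial-condition transient is a genuine gap. In your decomposition $\phi_i=T_i\mathbf{u}+A^{i-k+1}\bar{x}_{k-1}$ the transient is only \emph{bounded}, not small. The estimate $\|A^{i-k+1}\|\leq Mh_1^{p+1}$ for $i\geq k+p$ gives an $O(1)$ constant ($M=\sqrt{p}(1+2/\epsilon)^{p-1}$ can be large and $h_1^{p+1}$ is a fixed number in $(0,1)$), and $\|x_{k-1}\|$ is $O(1)$ by Remark \ref{rem:yk}; so the transient's contribution to $v^T\sum\phi_i\phi_i^Tv$ is of the \emph{same order} as the lower bound $c_0\delta_1$ you are trying to establish, and Young's inequality plus ``slack from the warm-up samples'' cannot absorb it. Treating $x_{k-1}$ as an arbitrary bounded vector genuinely loses the lemma: the result is true only because $x_{k-1}$ is itself generated by the same dynamics, and your argument never uses that. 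A second, related gap is the displayed inequality $v^T\sum(T_i\mathbf{u})(T_i\mathbf{u})^Tv\geq c_0'\,v^T\sum U_iU_i^Tv$ for every $v$: this uniform per-direction domination is essentially the content of the lemma (for zero initial condition) and does not follow from left-invertibility of the stacked Toeplitz map, which would only bound $\sum\|T_i\mathbf{u}\|^2$ from below in terms of $\|\mathbf{u}\|^2$, not relate the two quadratic forms in the same $v$.

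The standard proof (the one in Chen--Guo) sidesteps the transient entirely and is worth knowing. For a unit vector $v=(\alpha^T,\beta^T)^T$ set $w_i=v^T\phi_i=\alpha(z)y_i+\beta(z)u_i$ with $\alpha(z)=\sum_{j=1}^p\alpha_jz^j$, $\beta(z)=\sum_{j=1}^q\beta_jz^{j-1}$. Applying the annihilating operator $1-zA(z)$ and using the exact recursion $(1-zA(z))y_i=B(z)u_i$ gives
\begin{equation*}
(1-zA(z))\,w_i=\gamma(z)u_i=\gamma^TU_i,\qquad \gamma(z)=\alpha(z)B(z)+\beta(z)\bigl(1-zA(z)\bigr),
\end{equation*}
a polynomial identity with no initial-condition term. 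Coprimeness of $B(z)$ and $1-zA(z)$ makes the Sylvester map $(\alpha,\beta)\mapsto\gamma$ injective, so $\|\gamma\|\geq c_1>0$; then Cauchy--Schwarz on the left-hand side yields $|\gamma^TU_i|^2\leq C_2\sum_{l=0}^{p}w_{i-l}^2$, and summing over $i=k+p,\dots,k+p+m-1$ (this is the true role of the $p$ warm-up samples: they supply the indices $w_{i-l}$ with $l\leq p$, not decay time for a transient) gives $m\delta_1c_1^2\leq\gamma^T\bigl(\sum U_iU_i^T\bigr)\gamma\leq C_2(p+1)\,v^TS_kv$, which is the claim with $c_0=mc_1^2/(C_2(p+1))$. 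Your instinct that coprimeness is the key ingredient is right, but you need this operator identity, not the convolution-plus-transient decomposition, to make it bite uniformly in $k$.
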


%

\begin{lemma}\label{lem:difference_rk}
For the iteration
\begin{equation}\label{rkequation}
r_{k}  =  \left(1 - \dfrac{\eta_{1}}{k}\right) r_{k-1} 
 +\dfrac{\eta_{2}}{k} \sum_{i=2}^{k}h_1^{i-1}r_{k-i} 
 + O\left( \dfrac{1}{k^{2}} \right),
\end{equation}
	where $ \eta_{1}$, $\eta_{2}>0$, $ 0<h_1<1 $, and $ r_0 >0$. We can get the following assertions:
	
	 $ r_{k} $ converges to zero if and only if  $ \eta_1 -\dfrac{\eta_{2}h_{1}}{1-h_{1}}>0 $, and we have
\begin{align*}
r_{k}=\begin{cases}
O\left(\dfrac{1}{k^{\eta}}\right), &0<\eta<1;\\
O\left(\dfrac{ln k}{k}\right), &\eta=1;\\
O\left(\dfrac{1}{k}\right), &\eta>1.
\end{cases}
\end{align*}
where $ \eta=\eta_1 -\dfrac{\eta_{2}h_{1}}{1-h_{1}}$.
\end{lemma}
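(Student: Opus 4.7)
The plan is to reduce the convolution recursion \eqref{rkequation} to a standard scalar one-step recursion of the form $r_k = (1-\eta/k)r_{k-1} + O(1/k^2)$ with $\eta = \eta_1 - \eta_2 h_1/(1-h_1)$, and then invoke the classical rate lemma for such recursions. The key observation is that because the weights $h_1^{i-1}$ decay geometrically, the convolution $\sum_{i=2}^k h_1^{i-1}r_{k-i}$ is essentially local in $k$ and, to leading order, equals $r_{k-1}\cdot h_1/(1-h_1)$, since $\sum_{i=2}^\infty h_1^{i-1} = h_1/(1-h_1)$.

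Concretely, I rewrite
\[
\sum_{i=2}^k h_1^{i-1} r_{k-i} = \frac{h_1}{1-h_1}\,r_{k-1} + \epsilon_k - \frac{h_1^k}{1-h_1}\,r_{k-1}, \qquad \epsilon_k := \sum_{i=2}^k h_1^{i-1}(r_{k-i}-r_{k-1}),
\]
substitute back into \eqref{rkequation}, and absorb $h_1^k r_{k-1}/k$ into $O(1/k^2)$ once $r_k$ has been shown to be a priori bounded (via a crude induction on the original recursion). The correction $\epsilon_k$ is then handled by splitting at $i_0=\lceil c\log k\rceil$: the tail $i>i_0$ is exponentially negligible because $h_1^{i-1}$ decays geometrically, while for the head $i\le i_0$ the differences $|r_{k-i}-r_{k-1}|$ are controlled by telescoping the recursion back $i$ steps, yielding a bound of order $i/k$ times the current scale of $r$. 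Hence $\eta_2\epsilon_k/k$ enters only at second order and can be folded into the $O(1/k^2)$ error, producing the effective scalar recursion. Iterating that recursion gives the classical telescoping estimate
\[
|r_k| \le |r_0|\prod_{j=1}^k\!\left(1-\frac{\eta}{j}\right) + \sum_{j=1}^k \frac{C}{j^2}\prod_{\ell=j+1}^k\!\left(1-\frac{\eta}{\ell}\right),
\]
and since $\prod_{\ell=j+1}^k(1-\eta/\ell)\sim (j/k)^\eta$, one obtains $O(1/k^\eta)$ for $0<\eta<1$, $O(\log k/k)$ for $\eta=1$, and $O(1/k)$ for $\eta>1$. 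For the necessity direction, if $\eta\le 0$ the effective multiplier $1-\eta/k$ is asymptotically $\ge 1$ and the $O(1/k^2)$ errors are summable, so $r_k$ cannot decay to zero from a strictly positive $r_0$.

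The hard part will be making the reduction $\epsilon_k \ll r_{k-1}$ fully rigorous, because bounding $|r_{k-i}-r_{k-1}|$ requires a priori knowledge of how fast $r_k$ changes, which is precisely what we are trying to prove. I would resolve this circularity by a bootstrap argument: first establish $r_k=O(1)$, then upgrade to $r_k=o(1)$, and finally extract the polynomial or logarithmic rate by induction, each stage supplying just enough regularity on the increments of $r$ to close the next stage. A secondary subtlety is the boundary region where $k$ is small and $1-\eta_1/k$ may be negative; this is harmless for asymptotic rates but must be treated by starting the telescoping from an index $k_0$ large enough that all multipliers lie in $(0,1)$.
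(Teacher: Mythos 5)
Your argument is correct in outline but takes a genuinely different route from the paper. The paper exploits the exactly geometric weights: it forms \eqref{rkequation} minus $\tfrac{(k-1)h_1}{k}$ times the same recursion at step $k-1$, which algebraically annihilates the convolution tail and leaves a second--order linear recursion in $r_{k-1},r_{k-2}$; factoring its characteristic polynomial (roots $1-\eta/k+O(1/k^2)$ and $h_1+O(1/k)$) and setting $z_k=r_k-(1-\eta/k)r_{k-1}$ shows $z_k=O(1/k^2)$, after which the same telescoping product you use gives the rates. You instead localize the convolution, writing $\sum_{i\ge 2}h_1^{i-1}r_{k-i}=\tfrac{h_1}{1-h_1}r_{k-1}+\epsilon_k+O(h_1^k)$ and showing $\epsilon_k=O(1/k)$ by the head/tail split at $i_0=\lceil c\log k\rceil$ together with the increment bound $|r_{k-i}-r_{k-1}|=O(i/k)$ obtained by telescoping the recursion; this is a perturbative rather than exact reduction, costs you the a priori boundedness step (which does close: with $M_k=\max_{j\le k}|r_j|$ one gets $M_k\le(1-\eta/k)M_{k-1}+O(1/k^2)$ when $\eta>0$), but is more robust --- it would survive replacing $h_1^{i-1}$ by any weights with a geometric (or merely fast enough) tail, and it makes transparent why $\eta=\eta_1-\eta_2\sum_{i\ge2}h_1^{i-1}$. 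Two small remarks: your feared circularity is not actually present --- boundedness alone already gives $\epsilon_k=O(1/k)$ and hence the $O(1/k^2)$ error, so the full $O(1)\to o(1)\to$ rate bootstrap is unnecessary; and your ``only if'' direction is as heuristic as the paper's own (when $\eta\le 0$ the a priori bound degrades to $M_k=O(k^{-\eta})$, so the reduction to the scalar recursion needs to be rechecked before concluding $r_k\not\to 0$), but since the paper itself only asserts this direction from the reduced recursion, this does not put you below the published standard of rigor.
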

The proof of Lemma \ref{lem:difference_rk} is put in Appendix \ref{app:A}.

\begin{lemma}\label{lem:xn}
	For a sequence $\{x_n>0, n=1, 2, \ldots \}$, we have:
	$$\dfrac{x_n+x_{n+1}+\cdots+x_{n+N-1}}{N}=O\left(\dfrac{1}{n^t}\right)\Leftrightarrow x_n=O\left(\dfrac{1}{n^t}\right)$$
	where $t>0$ and $N$ is a positive integer.
\end{lemma}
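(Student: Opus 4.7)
The plan is to prove the two implications separately, exploiting only positivity of the $x_n$ and the elementary monotonicity of the function $n\mapsto n^{-t}$ for $t>0$.

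For the ``$\Leftarrow$'' direction, I would start from the assumption $x_n\leq C/n^t$ for some constant $C$ and all sufficiently large $n$. Summing this bound over $N$ consecutive indices gives
\[
x_n+x_{n+1}+\cdots+x_{n+N-1}\leq\sum_{k=0}^{N-1}\frac{C}{(n+k)^t}\leq \frac{NC}{n^t},
\]
where the last inequality uses $(n+k)^{-t}\leq n^{-t}$ for $t>0$ and $k\geq 0$. Dividing by $N$ yields the desired $O(1/n^t)$ bound on the average. This direction does not even require positivity, only an upper bound on $x_n$.

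For the ``$\Rightarrow$'' direction, this is where positivity is essential. Assume $(x_n+\cdots+x_{n+N-1})/N\leq C/n^t$, i.e., $x_n+\cdots+x_{n+N-1}\leq NC/n^t$. Since every $x_i>0$, I can drop all terms of the sum except $x_n$ itself and obtain the one-line estimate
\[
x_n\leq x_n+x_{n+1}+\cdots+x_{n+N-1}\leq \frac{NC}{n^t},
\]
which is exactly $x_n=O(1/n^t)$.

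There is no real obstacle here; the only subtle point is to note that the ``$\Rightarrow$'' direction genuinely requires the hypothesis $x_n>0$, without which the implication is false (a sum can be small while individual summands are large with cancellation). I would mention this briefly as a remark so the reader sees why the positivity hypothesis in the statement is not gratuitous.
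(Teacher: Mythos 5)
Your proof is correct and follows essentially the same route as the paper's: bounding $x_n$ by the full positive sum for the forward direction, and bounding each $x_{n+k}$ by $C/n^t$ via monotonicity of $n\mapsto n^{-t}$ for the reverse. The remark on why positivity is needed is a nice touch but otherwise there is nothing to add.
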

The proof of Lemma \ref{lem:xn} is supplied in Appendix \ref{app:B}.

\begin{theorem}[Convergence]\label{thm:convergence}
   Under Assumptions \ref{ass:A}-\ref{ass:uk}, the parameter estimate $\hat{\theta}_k$ given in algorithm \eqref{algorithm} converges to the true parameter of system \eqref{sys:theta} in mean square sense, i. e.
   $$\lim _{k \rightarrow \infty} E\{ e_{k}^{T} e_{k}\}=0,$$
   if 
   \begin{equation}\label{condition}
f_{m}\delta-\dfrac{ f_{M}\sqrt{g(h)} M_{2}M}{1-h_1}(M_{2}+2M_{1})>0,
\end{equation}
where $h$ is given in Assumption \ref{ass:A}, $g(h)$ is given in Remark \ref{rem:a2leqh}, $M$ and $h_1$ are given in Remark \ref{rem:Akfanshu},  $f_M$ and $f_m$ are the maximum and minimum of the function $f(x)$ on the interval $[C-BM_2, C+BM_2]\cup[C-BM_1, C+BM_1]$, $C$ is the quantization threshold, $M_1$ and $M_2$ are given in Remark \ref{rem:M12}, and $\delta$ is given in Lemma \ref{lem:delta}, respectively.
  \end{theorem}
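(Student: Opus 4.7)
The plan is to obtain a scalar recursion for $r_k := E\|e_k\|^2$ of the form covered by Lemma \ref{lem:difference_rk}, and then verify that condition \eqref{condition} coincides with the hypothesis $\eta_1 > \eta_2 h_1/(1-h_1)$ of that lemma. Using $\theta = \Pi_{\Theta}(\theta)$ and the non-expansivity of $\Pi_{\Theta}$ on the update \eqref{algorithm},
\[
\|e_k\|^2 \leq \|e_{k-1}\|^2 + \tfrac{2\gamma}{k}\, e_{k-1}^T \varphi_k \Delta_k + O(1/k^2),
\]
with $\Delta_k := F(C - \varphi_k^T \hat\theta_{k-1}) - s_k$; the $O(1/k^2)$ bound uses $|\Delta_k|\leq 1$ and $\|\varphi_k\|\leq M_2$ (Remark \ref{rem:M12}). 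Split $\Delta_k$ as $[F(C-\varphi_k^T\hat\theta_{k-1}) - F(C-\phi_k^T\theta)] + [F(C-\phi_k^T\theta) - s_k]$; under Assumption \ref{ass:noise} the second bracket is a martingale difference with respect to $\mathcal F_{k-1}$, and the mean-value theorem turns the first into $-f(\xi_k)\bigl[\varphi_k^T e_{k-1} - (\phi_k-\varphi_k)^T\theta\bigr]$ with $\xi_k$ in the interval where $f_m \leq f(\xi_k) \leq f_M$. Taking expectations thus produces a favorable PE-gain term $-\tfrac{2\gamma}{k}E[f(\xi_k)\,e_{k-1}^T\varphi_k\varphi_k^T e_{k-1}]$ and an unfavorable coupling term driven by $(\phi_k-\varphi_k)^T\theta$.

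The coupling term is controlled by propagating the output-prediction error. From $y_k = \phi_k^T\theta$ and $\hat y_k = \varphi_k^T \hat\theta_k$,
\[
\tilde y_k := \hat y_k - y_k = \varphi_k^T e_k + \sum_{i=1}^p a_i\,\tilde y_{k-i},
\]
so $\tilde Y_k = A\,\tilde Y_{k-1} + (\varphi_k^T e_k)\,\mathbf{e}_1$ with $A$ the companion matrix of Remark \ref{rem:stable}. Iterating and applying $\|A^k\|\leq M h_1^k$ from Remark \ref{rem:Akfanshu} yields $\|\tilde Y_{k-1}\|\leq M h_1^{k-1}\|\tilde Y_0\| + M M_2\sum_{i=1}^{k-1} h_1^{k-1-i}\|e_i\|$. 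Combined with Cauchy--Schwarz and Remark \ref{rem:a2leqh}, which bounds $|(\phi_k-\varphi_k)^T\theta| = |\sum_i a_i\tilde y_{k-i}|$ by $\sqrt{g(h)}\,\|\tilde Y_{k-1}\|$, the coupling term of Step 1 is dominated by a constant multiple of $\tfrac{1}{k}\,E\bigl[\|e_{k-1}\|\sum_{i=1}^{k-1} h_1^{k-1-i}\|e_i\|\bigr]$, plus a contribution decaying geometrically in $k$.

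To extract a coercive bound from the PE-gain term, the inequality is aggregated over the window $k, k+1, \ldots, k+N-1$ of Lemma \ref{lem:delta}. Writing $\varphi_i = \phi_i + (\varphi_i-\phi_i)$ separates the true regressor (which supplies the PE lower bound $\delta I_{p+q}$) from a correction again driven by $\tilde Y_{i-1}$, while Lemma \ref{lem:theta} lets one replace $e_i$ by $e_{k-1}$ inside the window up to $O(1/k)$ errors. After rearrangement, use of $2ab\leq a^2 + b^2$ on the residual cross terms, and reindexing the convolution with kernel $h_1^{k-1-i}$, one obtains
\begin{equation*}
r_k \leq \Bigl(1-\tfrac{\eta_1}{k}\Bigr) r_{k-1} + \tfrac{\eta_2}{k}\sum_{i=2}^{k} h_1^{i-1}\, r_{k-i} + O\bigl(\tfrac{1}{k^2}\bigr),
\end{equation*}
with $\eta_1$ proportional to $\gamma f_m\delta$ and $\eta_2$ proportional to $\gamma f_M\sqrt{g(h)}\,M M_2(M_2+2M_1)$, so that condition \eqref{condition} is exactly the statement $\eta_1 - \eta_2 h_1/(1-h_1) > 0$. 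Lemma \ref{lem:difference_rk} then yields $r_k \to 0$, which is the asserted mean-square convergence.

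The main obstacle is the two-way coupling between $\hat\theta_k$ and $\hat y_k$: since $\|A\|$ generally exceeds one, no single-step contraction for $\tilde y_k$ is available, and since $\varphi_k \neq \phi_k$ the PE of Lemma \ref{lem:delta} cannot be invoked on $\varphi_k$ directly. Both difficulties must be resolved together by trading the two errors against each other through the geometric bound $\|A^k\|\leq M h_1^k$, which turns the accumulated coupling contribution into an absolutely summable series with ratio $h_1/(1-h_1)$; condition \eqref{condition} is precisely the quantitative statement that the PE gain $f_m\delta$ dominates this accumulated coupling.
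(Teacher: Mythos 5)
Your proposal follows essentially the same route as the paper's proof: projection non-expansivity, the mean-value-theorem decomposition of $F(C-\varphi_k^T\hat\theta_{k-1})-F(C-\phi_k^T\theta)$, the companion-matrix bound $\|A^k\|\leq Mh_1^k$ turning the output-prediction error into a geometric convolution of past $\|e_i\|$, window-averaging via Lemmas \ref{lem:theta} and \ref{lem:delta}, and reduction to the recursion handled by Lemma \ref{lem:difference_rk}. The only substantive differences are minor: you attach $e_{k-1}$ to $\varphi_k$ rather than to $\phi_k$ in the mean-value step (necessitating the extra $\varphi_i=\phi_i+(\varphi_i-\phi_i)$ split you mention), and to land on the exact constant in \eqref{condition} you need the weighted inequality $2ab\leq \alpha a^2+b^2/\alpha$ with $\alpha$ optimized to $M_2M/(1-h_1)$, as the paper does, rather than the unweighted form.
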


\begin{proof}
By algorithm \eqref{algorithm}, we have
\begin{equation}\label{prooffirst}
	\begin{aligned}
		\left\|e_{k}\right\|  \leq&\left\|e_{k-1}+\dfrac{\gamma}{k} \varphi_{k}\left(F\left(C-\varphi_{k}^{T} \hat{\theta}_{k-1}\right)-s_{k}\right)\right\|. \\
	\end{aligned}
\end{equation}
About $s_{k}$, we have
$Es_{k} = {EI}_{\left\{ z_{k} \leq C \right\}} = {E(I}_{\left\{ d_{k} \leq C - \phi_{k}^{T}\theta \right\}} = F(C - \phi_{k}^{T}\theta)$.
There exists $ \xi_{k} $ between $ C - \varphi_{k}^{T}{\widehat{\theta}}_{k - 1}$ and $C - \phi_{k}^{T}\theta $ , which makes the following equation true
\begin{align}
&F\left( C - \varphi_{k}^{T}{\hat{\theta}}_{k - 1} \right) - F\left( C - \phi_{k}^{T}\theta \right)\notag\\
 &=-f\left( \xi_{k} \right) \cdot \left( (\varphi_{k}-\phi_{k})^{T}{\hat{\theta}}_{k -1}+\phi_{k}^{T}e_{k-1}  \right).
 \label{eq:central}
\end{align}
Because of $ C - BM_{2} \leq C - \varphi_{k}^{T}{\hat{\theta}}_{k - 1} \leq C + BM_{2} $ and $ C + BM_{1} \leq C - \phi_{k}^{T}\theta \leq C + BM_{1} $, $ f\left( x \right) $ has maximum and minimum values in the bounded area $ \left\lbrack C - BM_{2},C + BM_{2} \right\rbrack \cup \left\lbrack C + BM_{1},C + BM_{1} \right\rbrack $, recorded as $ {f_{M},f}_{m} $, hence $ f_{m}\leq f(x) \leq f_{M} $.

Let $R_{k}=E \{e_{k}^{T} e_{k}\} $, $V_{k}=E\left\lbrace\left(\varphi_{k}-\phi_{k}\right)^{T}\left(\varphi_{k}-\phi_{k}\right)\right\rbrace $, $ \hat{F}_{k}=F\left(C-\varphi_{k}^{T} \hat{\theta}_{k-1}\right)$. By \eqref{prooffirst} and \eqref{eq:central}, we have
\begin{align}\label{rk1}
& R_{k} \leq R_{k-1}\notag\\
  +&\dfrac{2\gamma}{k}E\left\lbrack \varphi_{k}^{T}e_{k - 1}\left( {\hat{F}}_{k} - s_{k} \right)  \right\rbrack+ \dfrac{\gamma^{2}}{k^{2}}E\left\lbrack \varphi_{k}^{T}\varphi_{k}\left( {\hat{F}}_{k} - s_{k} \right)^{2} \right\rbrack \notag\\
=& R_{k-1}+\dfrac{2\gamma}{k}E\left\lbrack \varphi_{k}^{T}e_{k - 1}\left( {\hat{F}}_{k} - s_{k} \right)  \right\rbrack+O\left( \dfrac{1}{k^{2}} \right) \notag\\
=& R_{k-1}-\dfrac{2\gamma f\left( \xi_{k} \right)}{k}E\left\lbrack (\varphi_{k}-\phi_{k}+\phi_{k})^{T}e_{k - 1}\times \right.\notag\\
&\left.\left((\varphi_{k}-\phi_{k})^{T}{\hat{\theta}}_{k -1}+\phi_{k}^{T}e_{k-1} \right)  \right\rbrack+O\left( \dfrac{1}{k^{2}} \right) \notag\\
=& R_{k-1}-\dfrac{2\gamma f\left( \xi_{k} \right)}{k}E \left\lbrack (\phi_{k}^{T}e_{k-1})^{2}+ \varphi_{k}^{T} e_{k - 1}(\varphi_{k}-\phi_{k})^{T}{\hat{\theta}}_{k -1}\right. \notag\\
& \left.+(\varphi_{k}-\phi_{k})^{T}e_{k - 1}\phi_{k}^{T}e_{k-1} \right\rbrack + O\left( \dfrac{1}{k^{2}}\right)
\notag\\
\leq & R_{k-1}-\dfrac{2\gamma f_{m}}{k}E(\phi_{k}^{T}e_{k-1})^{2}+\notag \\
&\dfrac{2\gamma f_{M}}{k}E \left\lbrack| \varphi_{k}^{T} e_{k - 1}(\varphi_{k}-\phi_{k})^{T}{\hat{\theta}}_{k -1}|+\right. \notag\\
&\left.|(\varphi_{k}-\phi_{k})^{T}e_{k - 1}\phi_{k}^{T}e_{k-1}|\right\rbrack+O\left( \dfrac{1}{k^{2}}\right).
\end{align}
Let $ \tilde{y}_{k}=\hat{y}_{k}-y_{k} $, by Remark \ref{rem:a2leqh} and Assumption \ref{ass:Theta}, we have 
$$
\begin{aligned}
&E\left|\left(\varphi_{k}-\phi_{k}\right)^{T} \hat{\theta}_{k-1}\right| \\
=&E\left|(\tilde{y}_{k-1}, \ldots, \tilde{y}_{k-p})(\hat{a}_{1}^{k-1}, \ldots, \hat{a}_{p}^{k-1})^{T}\right|\\
\leq &\sqrt{g(h) V_{k}} .
\end{aligned} 
$$
Similarly, $ E\left|\left(\left(\varphi_{k}-\phi_{k}\right)^{T} e_{k-1}\right)\right|\leq 2\sqrt{g(h) V_{k}} $. 
\begin{align}
R_{k}
\leq & R_{k-1}-\dfrac{2\gamma f_{m}}{k}E(\phi_{k}^{T}e_{k-1})^{2}+\dfrac{2\gamma f_{M}}{k} \times \notag \\
& \left(M_{2}\sqrt{g(h)R_{k-1}V_{k}} +2M_{1}\sqrt{g(h)R_{k-1}V_{k}}\right)+O\left( \dfrac{1}{k^{2}}\right)\notag \\
\leq & R_{k-1}-\dfrac{2\gamma f_{m}}{k}E(\phi_{k}^{T}e_{k-1})^{2}+\dfrac{2\gamma f_{M}}{k} \times \notag \\
& (M_{2}+2M_{1})\sqrt{g(h)}\sqrt{\alpha_{1}R_{k-1}\dfrac{1}{\alpha_{1}}V_{k}} +O\left( \dfrac{1}{k^{2}}\right)\notag \\
\leq & R_{k-1}-\dfrac{2\gamma f_{m}}{k}E(\phi_{k}^{T}e_{k-1})^{2}+\dfrac{\gamma f_{M}}{k} \times \notag\\
& (M_{2}+2M_{1})\sqrt{g(h)}\left(\alpha_{1}R_{k-1}+\dfrac{V_{k}}{\alpha_{1}}\right) +O\left( \dfrac{1}{k^{2}}\right).\notag \\
\end{align}
 Take the average of $ k,k + 1,\ldots,k + N - 1 $ moments, and let $ \overline{R}_{k} = \dfrac{R_{k}  + \cdots + R_{k+ N - 1}}{N}\),\(\overline{V}_{k}  = \dfrac{V_{k}  + \cdots + V_{k+N-1} }{N} $, we have
$$
\begin{aligned}
\overline{R}_{k}  \leq \overline{R}_{k-1} & - \dfrac{2\gamma f_{m}}{N\left( k + N - 1 \right)}\sum_{i = k}^{k + N - 1}{E\left( \phi_{i}^{T}e_{i - 1} \right)^{2}} \\
+\dfrac{\gamma f_{M}}{k}&(M_{2}+2M_{1})\sqrt{g(h)}
 \left( \alpha_{1}\overline{R}_{k-1} +\dfrac{\overline{V}_{k}}{\alpha_{1}} \right)+O\left( \dfrac{1}{k^{2}}\right). \\
\end{aligned}
$$
For any $ k \leq i,j \leq k + N - 1 $, by Lemma \ref{lem:theta}, we have
$$
\begin{aligned}
\sum_{i = k}^{k + N - 1}\left( \phi_{i}^{T}e_{i - 1} \right)^{2}
 = & \sum_{i = k}^{k + N - 1}{e_{j - 1}^{T}\phi_{i}\phi_{i}^{T}e_{j - 1} + O\left( \dfrac{1}{k} \right)} \\
 = & e_{j - 1}^{T}\left( \sum_{i = k}^{k + N - 1}{\phi_{i}\phi_{i}^{T}} \right)e_{j - 1} + O\left( \frac{1}{k} \right). \\
\end{aligned}
$$
By Lemma \ref{lem:delta}, we have
\begin{align}
 & - \dfrac{2\gamma f_{m}}{N\left( k + N - 1 \right)}\sum_{i = k}^{k + N - 1}\left( \phi_{i}^{T}e_{i - 1} \right)^{2} \notag\\
 = & - \dfrac{2\gamma f_{m}}{N^{2} k }N\sum_{i = k}^{k + N - 1}\left( \phi_{i}^{T}e_{i - 1} \right)^{2}+O\left( \dfrac{1}{k^{2}} \right) \notag\\
 = & - \dfrac{2\gamma f_{m}}{N^{2}k }\sum_{j = k}^{k + N - 1}{e_{j - 1}^{T}\left( \sum_{i = k}^{k + N - 1}{\phi_{i}\phi_{i}^{T}} \right)e_{j - 1}} + O\left( \dfrac{1}{k^{2}} \right)  \notag\\
 \leq & - \dfrac{2\gamma f_{m}\delta}{Nk}\sum_{j = k}^{k + N - 1}{e_{j - 1}^{T}e_{j - 1}} + O\left( \dfrac{1}{k^{2}} \right) \notag\\
 =& - \dfrac{2\gamma f_{m}\delta}{k}\overline{R}_{k-1}  + O\left( \dfrac{1}{k^{2}} \right).
\end{align}
Hence
\begin{equation}\label{rk2}
\begin{aligned}
\overline{R}_{k}  \leq & \left(1 - \dfrac{\gamma}{k}\left(2f_{m}\delta- f_{M}\sqrt{g(h)}\alpha_{1}\left(M_{2}+2M_{1}\right) \right)\right) \overline{R}_{k-1} \\
 &+\dfrac{\gamma f_{M}\sqrt{g(h)}}{ k\alpha_{1} }\left(M_{2}+2M_{1}\right)\overline{V}_{k}  + O\left( \dfrac{1}{k^{2}} \right). \\
\end{aligned}
\end{equation}
Then, we need to analyze the convergence of $\overline{V}_{k}$.
We already know
$$
\begin{aligned}
\tilde{y}_{k}
&=A(z)(\hat{y}_{k-1}-y_{k-1})+\left(\hat{A}_{k}(z)-A(z)\right)\hat{y}_{k-1}\\
 &\quad +\left(\hat{B}_{k}(z)-B(z)\right)u_{k}
=A(z)\tilde{y}_{k-1}+\varphi_{k}^{T}e_{k}.\\
\end{aligned}
$$
Let $ \beta_{k+1}=	\begin{pmatrix}
		\tilde{y}_{k},
		\tilde{y}_{k-1},
		\ldots,
		\tilde{y}_{k-p+1}
	\end{pmatrix}^T
 $. Then
$$
\begin{aligned}
\beta_{k+1}&=A\beta_{k}+(\varphi_{k}^{T}e_{k}, 0, \ldots, 0)^T\\
&\ \ \vdots\\
&=A^{k+1} \beta_{0}+\sum_{i=0}^{k} A^{i}(\varphi_{k-i}^{T}e_{k-i}, 0, \ldots, 0)^T.
\end{aligned}
$$
By Lemma \ref{fanshu} and Remark \ref{rem:Akfanshu}, we have $  \|A^k\|\leq M h_1^k $, hence
\begin{equation}
\begin{aligned}
\|\beta_{k+1}\|&\leq Mh_1^{k+1}\| \beta_{0}\|+\sum_{i=0}^{k} Mh_1^{i}|\varphi_{k-i}^{T}e_{k-i}|\\
&\leq Mh_1^{k+1}\| \beta_{0}\|+M_2 M\sum_{i=0}^{k} h_1^{i}\|e_{k-i}\|,\\
\end{aligned}
\end{equation}
and
\begin{equation}
\begin{aligned}
&V_{k+1}=E\{\beta_{k+1}^{T}\beta_{k+1}\}  \\
\leq& M^2 h_1^{2k+2}E\|\beta_0\|^2+2M^2 h_1^{k+1}M_2 E\left\lbrace \| \beta_{0}\| \sum_{i=0}^{k} h_1^{i}\|e_{k-i}\|\right\rbrace\\
&+M_2^2 M^2 \sum_{i=0}^{k}\sum_{j=0}^{k} h_1^{i+j}E\|e_{k-i}\|\|e_{k-j}\| \\
\leq& M^2 h_1^{2k+2}E\|\beta_0\|^2+2M^2 h_1^{k+1}M_2 E\left\lbrace \| \beta_{0}\| \sum_{i=0}^{k} h_1^{i}|e_{k-i}|\right\rbrace\\
&+M_2^2 M^2 \sum_{i=0}^{k}\sum_{j=0}^{k} \dfrac{1}{2}h_1^{i+j}(R_{k-i}+R_{k-j}). \\
\end{aligned}
\end{equation}
Since $|e_{k-i}|$ is bounded and $h_1<1$, $E\left\lbrace \| \beta_{0}\| \sum_{i=0}^{k} h_1^{i}|e_{k-i}|\right\rbrace$ is also bounded, then we can get
\begin{equation}\label{vk}
\begin{aligned}
&V_{k+1}\\
\leq& M_2^2 M^2 \sum_{i=0}^{k}\sum_{j=0}^{k} \dfrac{1}{2}h_1^{i+j}(R_{k-i}+R_{k-j}) +O\left( h_1^{k+1}\right)\\
=&  M_2^2 M^2  \dfrac{1-h_1^{k+1}}{1-h_1}\sum_{i=0}^{k}h_1^{i}R_{k-i}+O\left( h_1^{k+1}\right).
\end{aligned}
\end{equation}
Taking the average of N moments for \eqref{vk} and substitute it into \eqref{rk2} , we can get
\begin{equation}
\begin{aligned}
&\overline{R}_{k} \\
 \leq & \left(1 - \dfrac{\gamma}{k}\left(2f_{m}\delta- f_{M}\sqrt{g(h)}\alpha_{1}\left(M_{2}+2M_{1}\right) \right)\right) \overline{R}_{k-1} \\
 &+\! \dfrac{\gamma f_{M}\sqrt{g(h)}}{ k\alpha_{1}\left(1\! -\! h_1\right) }\left(M_{2}\! + \! 2M_{1}\right)M_2^2 M^2 \sum_{i=1}^{k}h_1^{i-1}\overline{R}_{k-i} \\
 &+ O\left( \dfrac{1}{k^{2}} \right). \\
\end{aligned}
\end{equation}
Let 
$ \eta_1\!=\!2\gamma f_m \delta -\gamma  f_{M}\sqrt{g(h)}\left(M_{2}\!+\!2M_{1}\right)\left(  \alpha_{1}\!+\!\dfrac{ M_2^2 M^2}{ \alpha_{1}\left(1\! -\! h_1\right) }  \right)  $, $ \eta_2=\dfrac{\gamma f_{M}\sqrt{g(h)}}{ \alpha_{1}\left(1\! -\! h_1\right) }\left(M_{2}\! + \! 2M_{1}\right)M_2^2 M^2  $
and reorganize the inequality above, then we have
\begin{equation}
\begin{aligned}
\overline{R}_{k}  \leq  \left(1 - \dfrac{\eta_{1}}{k}\right) \overline{R}_{k-1} 
 +\dfrac{\eta_{2}}{k} \sum_{i=2}^{k}h_1^{i-1}\overline{R}_{k-i} 
 + O\left( \dfrac{1}{k^{2}} \right). 
\end{aligned}
\end{equation}

By Lemmas \ref{lem:difference_rk} and \ref{lem:xn}, what must be satisfied is

 $ \eta=2\gamma f_m \delta -\gamma  f_{M}\sqrt{g(h)}\left(M_{2}\!+\!2M_{1}\right)\left(  \alpha_{1}\!+\!\dfrac{ M_2^2 M^2}{ \alpha_{1}\left(1\! -\! h_1\right) }  \right)-\dfrac{\gamma f_{M}\sqrt{g(h)}h_1}{ \alpha_{1}\left(1\! -\! h_1\right)^2 }\left(M_{2}\! + \! 2M_{1}\right)M_2^2 M^2>0.$

Choose $ \alpha_{1}=\dfrac{M_{2}M}{1-h_1} $ to maximize $ \eta $. Then the condition changes to
\begin{equation}\label{eta}
\eta=2\gamma\left(f_{m}\delta-\dfrac{ f_{M}\sqrt{g(h)} M_{2}M}{1-h_1}(M_{2}+2M_{1})\right)>0,
\end{equation}
which implies (\ref{condition}).
\end{proof}

\begin{remark}\label{rem:cond}
The condition \eqref{condition} in Theorem \ref{thm:convergence} is sufficient rather than necessary. Generally speaking, the condition is naturally satisfied for a relatively small $ h $.
\end{remark}

\begin{theorem}[Convergence rate]\label{thm:cr}
Under Assumptions \ref{ass:A}-\ref{ass:uk},
the mean square convergence rate of the estimate given in algorithm \eqref{algorithm} can achieve $O(1/k)$, i. e.
$$ E \{e_{k}^{T} e_{k}\}=O\left(\dfrac{1}{k}\right),$$
by choosing the step size to satisfy
\begin{equation}
\begin{aligned}
\gamma>\dfrac{1}{2\left(f_{m}\delta-\dfrac{ f_{M}\sqrt{g(h)}M_{2}M}{1-h_1}(M_{2}+2M_{1})\right)},
\end{aligned}
\end{equation}
 where $h$, $g(h)$, $\delta$, $M$, $h_1$, $f_m$, $f_M$, $M_1$ and $M_2$ are all the same as those in Theorem \ref{thm:convergence}.
\end{theorem}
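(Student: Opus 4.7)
The plan is to reuse almost verbatim the recursive inequality for $\overline{R}_k$ derived in the proof of Theorem~\ref{thm:convergence} and then invoke Lemma~\ref{lem:difference_rk} in its $\eta>1$ regime, which directly yields the $O(1/k)$ rate for the averaged sequence; Lemma~\ref{lem:xn} then lifts the rate back to $R_k$ itself. Since Theorem~\ref{thm:convergence} already establishes everything needed up to the key recursion, this proof should essentially be a short epilogue.

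First, I would carry over without change the derivation in Theorem~\ref{thm:convergence} up to the inequality
\begin{equation*}
\overline{R}_{k}\leq\Bigl(1-\dfrac{\eta_{1}}{k}\Bigr)\overline{R}_{k-1}+\dfrac{\eta_{2}}{k}\sum_{i=2}^{k}h_1^{i-1}\overline{R}_{k-i}+O\Bigl(\dfrac{1}{k^{2}}\Bigr),
\end{equation*}
with $\eta_1$ and $\eta_2$ defined as in that proof and with the optimal choice $\alpha_1=M_2M/(1-h_1)$. Under this choice the effective exponent given by Lemma~\ref{lem:difference_rk} simplifies to
\begin{equation*}
\eta=\eta_1-\dfrac{\eta_2 h_1}{1-h_1}=2\gamma\Bigl(f_{m}\delta-\dfrac{f_{M}\sqrt{g(h)}M_{2}M}{1-h_1}(M_{2}+2M_{1})\Bigr),
\end{equation*}
exactly as in equation~(\ref{eta}) of the convergence proof. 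The assumptions of Lemma~\ref{lem:difference_rk} are therefore all in force with this $\eta$.

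Next, I would observe that the hypothesized lower bound on $\gamma$ in the theorem statement is precisely the condition $\eta>1$. Invoking Lemma~\ref{lem:difference_rk} in the third branch of its case analysis immediately delivers
\begin{equation*}
\overline{R}_{k}=O\Bigl(\dfrac{1}{k}\Bigr).
\end{equation*}
Finally, applying Lemma~\ref{lem:xn} with $t=1$ transfers this averaged rate back to the individual sequence, giving $R_k=E\{e_k^{T}e_k\}=O(1/k)$, which is the claim.

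The only place where genuine care is required, rather than bookkeeping, is verifying that the error term $O(1/k^2)$ in the recursion is truly uniform and that the reductions used in Theorem~\ref{thm:convergence} (the Lipschitz-type bound from Lemma~\ref{lem:theta}, the persistent-excitation step via Lemma~\ref{lem:delta}, and the Cauchy--Schwarz splitting with parameter $\alpha_1$) do not hide any $\gamma$-dependence that could alter the threshold; inspection of the derivation confirms that $\eta$ is linear in $\gamma$ with the coefficient shown above, so the stated lower bound on $\gamma$ is both necessary and sufficient for the Lemma~\ref{lem:difference_rk} regime $\eta>1$. Thus no new estimates are needed beyond those already in place, and the main (and essentially only) subtlety is matching the algebraic condition on $\gamma$ to the condition $\eta>1$ in Lemma~\ref{lem:difference_rk}.
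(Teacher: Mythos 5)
Your proposal is correct and follows essentially the same route as the paper: the paper's own proof simply takes the recursion and the expression for $\eta$ from equation~(\ref{eta}) in Theorem~\ref{thm:convergence}, imposes $\eta>1$ to land in the third branch of Lemma~\ref{lem:difference_rk}, and reads off the stated lower bound on $\gamma$, with Lemma~\ref{lem:xn} transferring the $O(1/k)$ rate from $\overline{R}_k$ back to $R_k$. Your version is slightly more explicit about the final lifting step via Lemma~\ref{lem:xn}, but the substance is identical.
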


\begin{proof}
By \eqref{eta} and Lemma \ref{lem:difference_rk},
let	
\begin{equation}\label{convrate}
\eta=2\gamma\left(f_{m}\delta-\dfrac{ f_{M}\sqrt{g(h)} M_{2}M}{1-h_1}(M_{2}+2M_{1})\right)>1,
\end{equation}
 then,
 $ \gamma>\dfrac{1}{2\left(f_{m}\delta-\dfrac{ f_{M}\sqrt{g(h)}M_{2}M}{1-h_1}(M_{2}+2M_{1})\right)}.$
\end{proof}

\begin{remark}\label{rem:var}
According to the definition of $ f_{M}$ and $f_m $, the variance of the noise cannot be too large or too small, otherwise it will affect convergence and convergence rate.
\end{remark}
\begin{remark}
We can find that when $ h$ converges to 0, the model (\ref{sys:ARMA1}) degenerates into FIR model, and the condition in Theorem \ref{thm:cr} changes to $ \gamma>\dfrac{1}{2f_{m}\delta}$, which is the same condition as that in Theorem 1 of \cite{wang2019adaptive}.
\end{remark}

\section{Simulation}

Consider the system with binary-valued observations
\begin{equation}\label{simsys}
\left\lbrace
\begin{matrix}
y_{k}=ay_{k-1}+bu_{k},\\
s_{k}=I_{\lbrace y_{k}+d_{k}\leq C \rbrace},
\end{matrix}\right.
\end{equation}
where $ \theta=[a,b]^{T} $ is unknown, the system noise $d_k \sim N(0,2) $, and the threshold $ C=0 $.

\subsection{The case with $ [a,b]^{T}=[-0.02,1]^{T} $}

\noindent(1) Convergence of the identification algorithm

Let $ \Omega=\lbrace (x_{1},x_{2})\big| | x_{1}|<0.03,0<x_{2}<1.1 \rbrace $, and $ \hat{\theta}_{0}=[0,0.9]^{T} $. The inputs is given as follows
 $u_k=1+0.01\omega_{k}$ as $k$ is odd and $u_k=0$ as $k$ is even,
where $\omega_{k}$ is uniformly distributed in $[0,1]$. By \eqref{simsys}, we can get that the outputs $y_k$ satisfies
$1\leq y_k \leq 1.01b/(1-a^2)$ as $k$ is odd and $|y_k|\leq (1.01\times |ab|)/(1-a^2)$ as $k$ is even.
By setting $ N=2$, we can get $\delta>0.47$, $h=0.03$,  $|\hat{y}_{k}|<(1.1\times 1.01)/(1-h^2)$, $ M_{1}^{2}<M_{2}^{2}<2.26$,
$f_{M}/f_{m}=e^{(B^{2}M_{2}^{2})/(2\sigma^{2})}<2 $.

Then, the condition of Theorem \ref{thm:convergence} is satisfied.
By the identification algorithm \eqref{algorithm} with $\gamma=3$, we can get the estimates of the parameters. Fig. \ref{FIG:00} shows that the estimate $ \hat{\theta}_{k} $ converges to the true parameter $ [-0.02,1]^{T} $, which is consistent with Theorem 1.

\begin{figure}[!h]
	\centering
\includegraphics[scale=0.57]{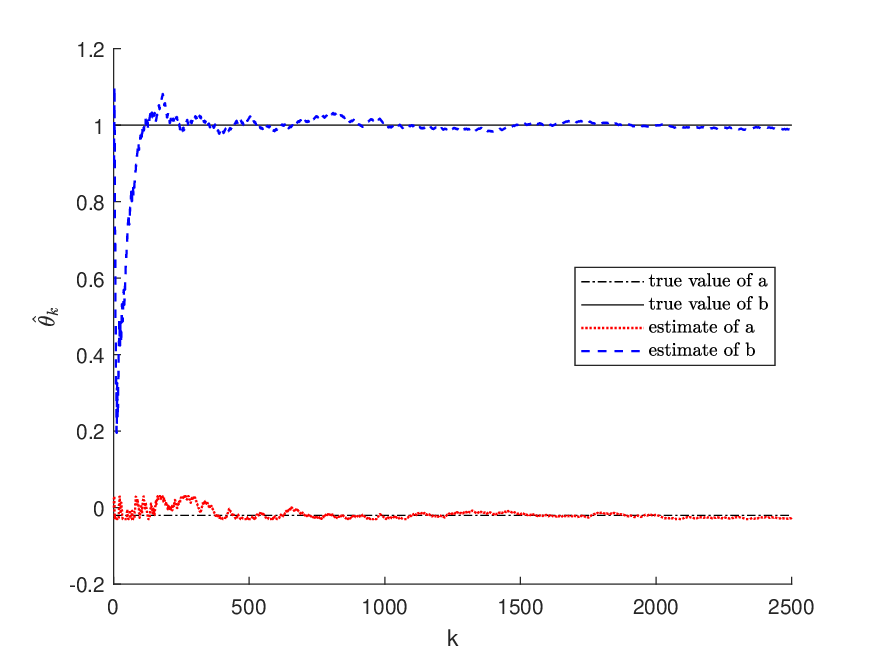}
	\caption{Trajectories of estimates}
	\label{FIG:00}
\end{figure}

Let $ \hat{\theta}_{0}=[0,0.7]^T $. The trajectories of the square of estimation error $\| e_k \|^2$ with different variances of noise are given in Fig. \ref{FIG:01}, which shows that too small noise variance will affect the convergence, and too large noise variance will affect the convergence rate ($ f_{m},f_{M} $ are affected).


\noindent(2) Convergence rate of the estimation error

 Fig. \ref{FIG:02} shows the logarithm of the parameter estimation error vs the logarithm of the index $k$ with different initial estimates.  From Fig. \ref{FIG:02}, we can see that the logarithms of the parameter estimation error with different initial estimates are all bounded by two linear functions of $\log (k)$, which implies that the convergence rate of $ e_{k}^{T} e_{k} $ is  $ O(1/k) $ and the initial estimates  has almost no effect on the convergence rate. These results are consistent with Theorem \ref{thm:cr}.

\begin{figure}[!h]
	\centering
\includegraphics[scale=0.57]{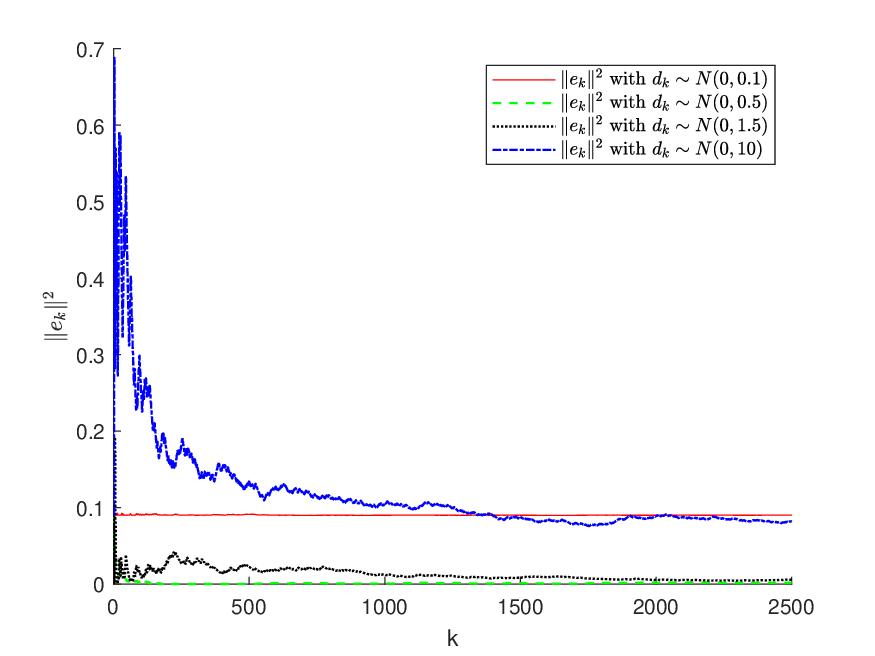}
	\caption{Trajectories of $\| e_k \|^2$ with different variances of noise}
	\label{FIG:01}
\end{figure}

\begin{figure}[!h]
	\centering
		\includegraphics[scale=0.55]{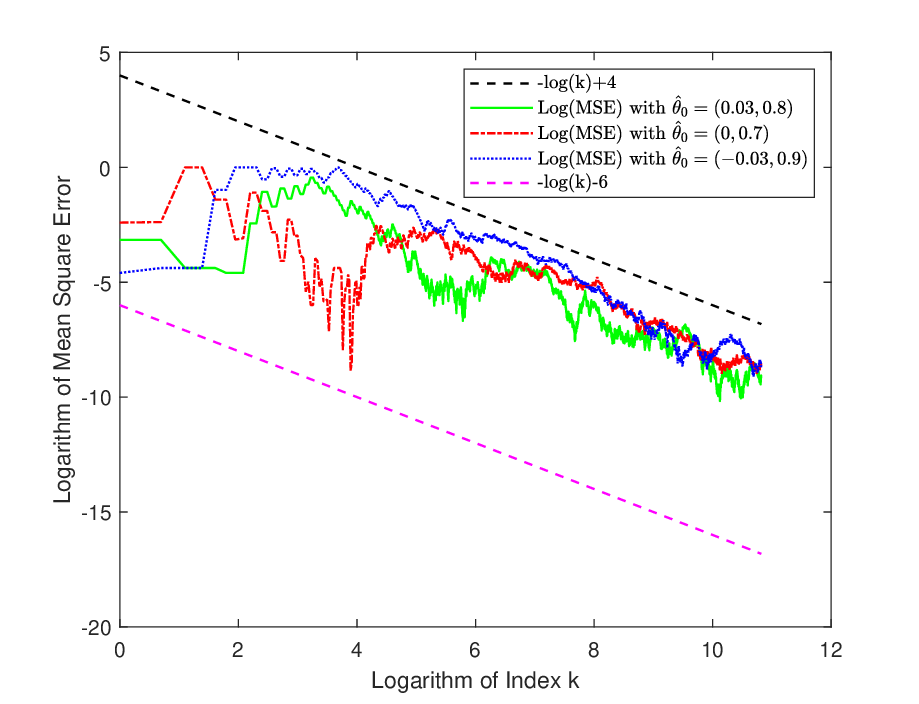}
	\caption{Trajectories of Log(MSE) with different $ \hat{\theta}_{0} $}
	\label{FIG:02}
\end{figure}

\begin{figure}[!h]
\centering
\includegraphics[scale=0.57]{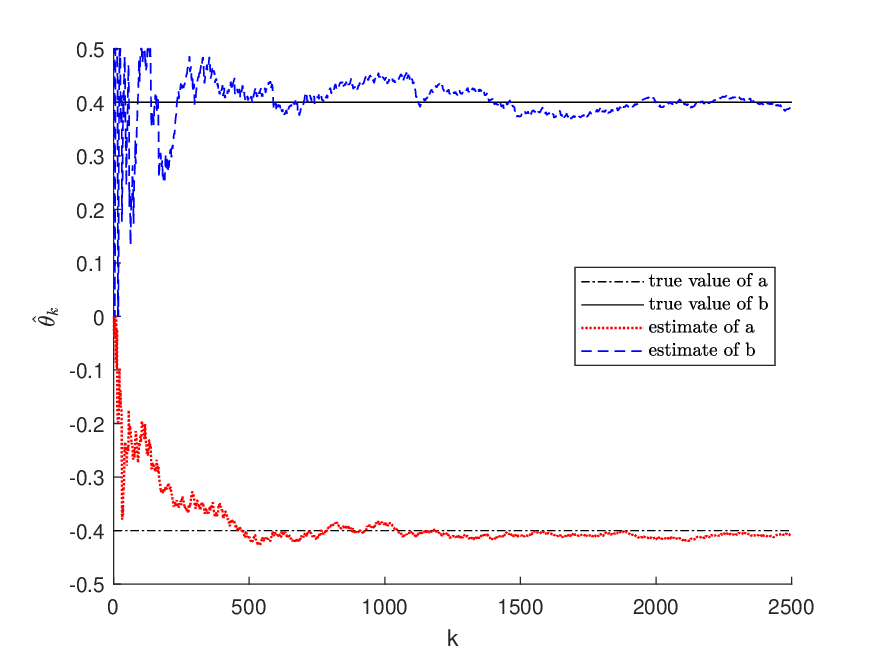}
    \caption{Trajectories of estimates with $u=5$}
	\label{FIG:2}
\end{figure}

\subsection{ The case with $[a,b]^{T}=[-0.4,0.4]^{T} $}

The conclusion of our theorem is sufficient but not necessary. Consider another system with parameters $\theta=[a,b]^{T}=[-0.4,0.4]^{T} $ and the inputs
$u_k=5$ as $k$ is odd and $u_k=0$ as $k$ is even. Let $\gamma=2$,  $ \hat{\theta}_{0}=[-0.5,0.25]^{T} $. Obviously, the condition is not satisfied at this time. However, Fig. \ref{FIG:2} shows the trajectories of the estimates $\hat{\theta}_{k}$ that can achieve to the true parameter.

\section{Conclusions}
This paper further studies the identification of high-dimensional ARMA models with binary-valued observations based on \cite{wang2023identification}. An online identification algorithm is proposed, which consists of parameter estimation and prediction of the original output. Compared to \cite{wang2023identification}, a more accurate upper bound for the output estimation error is given and based on this, it is proved that the estimates can converge to the true parameter in mean square sense. The convergence rate of the proposed identification algorithm for high-dimensional ARMA models can achieve as the same convergence rate O(1/k) as the algorithm in \cite{wang2019adaptive} for FIR models.

There are many meaningful future works. If the variance of noise is unknown, it can be estimated along with the parameters (see \cite{le2006noise}) by the proposed algorithm in this paper. It is worth studying whether the estimates still have good properties. Besides, the identification of ARMA models with multi-threshold quantized observations and the relationship between convergence rate and quantization level can be studied by referring to \cite{guo2014id}.  What's more, there are some other interesting problems. For examples, how to construct an optimal identification algorithm in the sense of CR lower bound? How to apply the proposed algorithm to adaptive tracking control of systems with binary-valued observations?

\section*{Appendix}
\appendix

\section{Proof of Lemma \ref{lem:difference_rk}}\label{app:A}
\begin{proof}
From \eqref{rkequation} and
\begin{equation}\label{rk-1equation}
r_{k-1}  =  \left(1 - \dfrac{\eta_{1}}{k-1}\right) r_{k-2} 
 +\dfrac{\eta_{2}}{k-1} \sum_{i=3}^{k}h_1^{i-2}r_{k-i} 
 + O\left( \dfrac{1}{k^{2}} \right),
\end{equation}
we get 
\eqref{rkequation}$-\dfrac{(k-1)h_1}{k} \times$  \eqref{rk-1equation} is
\begin{align*}
r_{k}  =&  \left(1 +h_1 - \dfrac{h_{1}+ \eta_{1}}{k}\right) r_{k-1} \\
& +h_1 \left( -1 +\dfrac{\eta_{1}+\eta_{2}+1}{k}\right) r_{k-2} 
 + O\left( \dfrac{1}{k^{2}} \right).
\end{align*}
By solving the characteristic equation, we can obtain
\begin{align*}
&r_{k}-\left(1-\dfrac{1}{k}\left(\eta_1 -\dfrac{\eta_{2}h_{1}}{1-h_{1}}\right)\right)r_{k-1}  \\
=& \left(h_1 \!-\!\dfrac{h_1\!-\!h_1^2\!+\!\eta_2 h_1}{k(1-h_1)}\right) \left[ r_{k-1}\!-\!\Bigg(1\!-\!\dfrac{\eta_1 \!-\!\dfrac{\eta_{2}h_{1}}{1\!-\!h_{1}}}{k\!-\!1}\Bigg)r_{k-2}\right] \\
&  + O\left( \dfrac{1}{k^{2}} \right).
\end{align*}
Let $z_k=r_{k}-\left(1-\dfrac{1}{k}\left(\eta_1 -\dfrac{\eta_{2}h_{1}}{1-h_{1}}\right)\right)r_{k-1}$, then
\begin{align*}
z_k=&\left(h_1 \!+O\left( \dfrac{1}{k} \right)\right) z_{k-1}+O\left( \dfrac{1}{k^2} \right)  \\
=&\left(h_1 \!+O\left( \dfrac{1}{k} \right)\right)\left(h_1 \!+O\left( \dfrac{1}{k-1} \right)\right) z_{k-2}\\
&+\left(h_1 \!+O\left( \dfrac{1}{k} \right)\right)O\left( \dfrac{1}{(k-1)^2} \right)+ O\left( \dfrac{1}{k^2} \right)  \\
=&O\left( \sum_{i=1}^{k}\dfrac{h_1^{k-i}}{i^2} \right)  \\
=& O\Bigg( \dfrac{ \sum_{i=1}^{k}\dfrac{h_1^{-i}}{i^2}  }{h_1^{-k}} \Bigg) \\
=& O\Bigg( \dfrac{ \dfrac{h_1^{-k}}{k^2}  }{h_1^{-k}-h_1^{-k+1}} \Bigg) \\
=& O\left(  \dfrac{1}{k^2}  \right).
\end{align*}
Thus, $r_{k}=\left(1-\dfrac{1}{k}\left(\eta_1 -\dfrac{\eta_{2}h_{1}}{1-h_{1}}\right)\right)r_{k-1} + O\left(  \dfrac{1}{k^2}  \right)$.
For the iteration $r_{k}=\left(1-\dfrac{\eta}{k}\right)r_{k-1} + O\left(  \dfrac{1}{k^2}  \right)$, $ r_{k} $ converges if $ \eta>0 $, and we have
\begin{align*}
r_{k}=&\left(1-\dfrac{\eta}{k}\right)r_{k-1}+O\left(\dfrac{1}{k^2}\right) \\
=& \prod_{l=1}^{k}\left(1-\dfrac{\eta}{l}\right)r_{0}+\sum_{l=1}^{k}\prod_{i=l}^{k}\left(1-\dfrac{\eta}{i}\right)O\left(\dfrac{1}{l^{2}}\right)\\
=&O\left(\dfrac{1}{k^{\eta}}\right) + \sum\limits_{l=1}^{k}O\left( \Big(\dfrac{l+1}{k}\Big)^{\eta}\right) O\left(\dfrac{1}{l^{2}}\right)\\
=&\begin{cases}
O\left(\dfrac{1}{k^{\eta}}\right), &0<\eta<1;\\
O\left(\dfrac{ln k}{k}\right), &\eta=1;\\
O\left(\dfrac{1}{k}\right), &\eta>1.
\end{cases}
\end{align*}
\end{proof}

\section{Proof of Lemma \ref{lem:xn}}\label{app:B}
\begin{proof}
i) If $\dfrac{x_n+\cdots+x_{n+N-1}}{N}=O\left(\dfrac{1}{n^t}\right),$
 then
$$ 0 \leq \dfrac{x_n}{N}\leq\dfrac{x_n+\cdots+x_{n+N-1}}{N}=O\left(\dfrac{1}{n^t}\right)$$
$ \Rightarrow x_n=O\left(\dfrac{1}{n^t}\right) .$

ii) If $x_n=O\left(\dfrac{1}{n^t}\right),$
 then
$ x_{n+l}=O\left(\dfrac{1}{(n+l)^t}\right)= O\left(\dfrac{1}{n^t}\right), l=1, \ldots, N-1.$
Hence
$\dfrac{x_n+\cdots+x_{n+N-1}}{N}=O\left(\dfrac{1}{n^t}\right).$
The lemma is proved.
\end{proof}

%

\end{document}